\newtheorem{theorem}{Theorem}[section]
\newtheorem{lemma}[theorem]{Lemma}
\newtheorem{proposition}[theorem]{Proposition}
\newtheorem{remark}[theorem]{Remark}
\newtheorem{conclusion}[theorem]{Conclusion}
\newcommand{\mP}{{\mathbb P}}
\newcommand{\A}{{\mathbb A}}
\newcommand{\C}{{\mathbb C}}
\newcommand{\G}{{\mathbb G}}
\newcommand{\Z}{{\mathbb Z}}
\newcommand{\cI}{{\mathcal{I}}}
\newcommand{\cF}{{\mathcal{F}}}
\newcommand{\HH}{{\mathbf H}}
\newcommand{\cX}{{\mathcal{X}}}
\newcommand{\Grass}{\operatorname{Grass}}
\newcommand{\Hilb}{\operatorname{Hilb}}
\newcommand{\tJ}{\tilde{J}}
\newcommand{\cA}{{\mathcal{A}}}
\newcommand{\cB}{{\mathcal{B}}}
\newcommand{\cO}{{\mathcal{O}}}
\newcommand{\cP}{{\mathcal{P}}}
\newcommand{\red}{\operatorname{red}}
\newcommand{\reg}{\operatorname{reg}}
\newcommand{\Pic}{\operatorname{Pic}}
\newcommand{\Proj}{\operatorname{Proj}}
\numberwithin{equation}{section}
\begin{document}

\baselineskip=16pt

\title[The irreducible components of $\Hilb^{4n}(\mP^3)$]{The irreducible components of $\Hilb^{4n}(\mP^3)$}

\author[G. Gotzmann]{Gerd Gotzmann}

\address{Isselstr. 34, D-48431 Rheine}

\email{g.gotzmann@t-online.de}

\date{November 14, 2008}

\begin{abstract}
It is shown that $\Hilb^{4n}(\mP^3)$ has exactly two irreducible components.
\end{abstract}

\maketitle

{\sc Introduction.} Let $k$ be an algebraically closed field of characteristic 0 and let $\HH=H_{4,1}=\Hilb^{4n}(\mP_k^3)$ denote the Hilbert scheme parametrizing the curves of degree 4 and genus 1, i.e. the closed subschemes of $\mP^3$ with Hilbert polynomial $4n$. Vainsencher and Avritzer have given an explicit description of that component $H_{VA}$ of $\HH$, which contains the points corresponding to complete intersections of two quadrics, and which shows that this component is smooth. From a general theorem of Reeves and Stillman it follows that there is a unique component $H_{RS}$ of $\HH$, which contains the lexicographic point. The purpose of this paper is to prove the following

\noindent {\bf Theorem.} {\it
$\HH_{\red}$ consists of the two irreducible components $H_{VA}$ and $H_{RS}$ of dimension 16 and 23 respectively.}

This result is similar to a result of Piene and Schlessinger, who showed that $\Hilb^{3n+1}(\mP_k^3)$ consists of two irreducible components, too. They showed moreover that these components as well as their intersection are smooth. It would be very interesting to know, if the same is true for $\HH=H_{4,1}$, for then, as in the case of $H_{3,0}$, it would follow that $H^1(\HH,\cO_{\HH})=(0)$ and $\Pic(\HH)\simeq\Z^3$, if $k=\C$. (The computation of the cohomology of the structure sheaf of $\Hilb^d(\mP^2)$ and of its Picard group had been achieved by Fogarty and Iarrobino, respectively; see the introductions of [F1] and [F2].)

\section{The Borel ideals in $\HH$}

A curve $C\subset\mP^3=\Proj P,P=k[x,y,z,t]$, of degree 4 and genus 1 is defined by an ideal $\cI\subset\cO_{\mP^3}$ with Hilbert polynomial $Q(T)={T-1+3\choose 3}+{T-5+2\choose 2}+{T-6+1\choose 1}$. There are 4 such ideals $\cB_i, 3\le i\le 6$, which are invariant under the action of the Borel group $B(4,k)$ on $P$, and which we call Borel ideals. The regularity of $\cB_i$ is $i$ and $\varphi_i(n)=h^0(\cB_i(n))$ denotes the Hilbert function:\\
\phantom{which are invariant}$\cB_3=(x^2,xy,y^3),\varphi_3=(0,0,2,8,19,36,60,92,\cdots)$\\
\phantom{which are invariant}$\cB_4=(x^2,xy,xz^2,y^4),\varphi_4=(0,0,2,8,19,36,\cdots)$\\
\phantom{which are invariant}$\cB_5=(x^2,xy,xz,y^5,y^4z),\varphi_5=(0,0,3,9,19,36,\cdots)$\\
\phantom{which are invariant}$\cB_6=(x,y^5,y^4z^2),\varphi_6=(0,1,4,10,20,36,\cdots)$

\section{The stratification of $\HH$ by means of the regularity}
A general theorem of Galligo says that for each ideal $\cI$ and general element $g$ of $GL(4,k)$ the initial ideal of $g(\cI)$ (in the inverse lexicographic order) is a Borel ideal. A theorem of Bayer and Stillman says that $\cI$ and the generic initial ideal of $\cI$ have the same regularity and the same Hilbert function. (For all these general results, cf.[Gre].)

Let $R_i$ be the locally closed set of ideals in $\HH$ with regularity $i$. We take the $R_i$ as subschemes of $\HH$ with the induced reduced scheme structure, and we consider each time an ideal $\cI\in R_i$ and put $I_n=H^0(\mP^3,\cI(n))$ and $I=\mathop{\oplus}\limits_{n\ge 0}I_n$.

\subsection{Description of $R_3$}
>From the Hilbert function $\varphi_3$ we see that $I_2=\langle f,g\rangle$ with two forms $f,g\in P_2$. If $f$ and $g$ have no common divisor, then it is well known that the ideal $(f,g)$ has the Hilbert function $\varphi_3$. If we suppose that $f$ and $g$ have a common divisor, we can write $f=\ell\ell_1,g=\ell\ell_1$ where $\ell,\ell_1,\ell_2,\cdots$ always denote nonzero linear forms in $P$. As $\dim P_1I_2=7$, one has $I_3=P_1I_2+f\cdot k$. Here the form $f\in P_3$ is not divisible by $\ell$, because then $I\subset (\ell)$ would follow. (As $\reg (\cI)=3$, one has $P_nI_3=I_{n+3},n\ge 0$.) $I_4=P_2I_2+P_1f$ has the dimension 19, $\dim P_2I_2=16, \dim fP_1=4$, and therefore $\dim P_2I_2\cap fP_1=1$ follows. As $f$ is not divisible by $\ell$, it follows that $f\in (\ell_1,\ell_2)_3$.

\begin{proposition}
$\cI$ has the regularity 3 iff

1. $\cI=(f,g),f,g\in P_2$ without a common divisor, or

2. $\cI=(\ell\ell_1,\ell\ell_2,f)$, where $\ell_1,\ell_2$ are linear independent and $f\in(\ell_1,\ell_2)_3$ is not divisible by $\ell$.
\end{proposition}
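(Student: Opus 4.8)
The proposition is a biconditional characterizing when an ideal $\cI\in\HH$ has regularity 3. The forward direction ("only if") is essentially the content of the paragraph preceding the statement, which analyzes $I_2$, $I_3$, $I_4$ dimension by dimension using the Hilbert function $\varphi_3$; the backward direction ("if") requires checking that each of the two listed families actually consists of saturated ideals with Hilbert polynomial $Q(T)$ and regularity exactly 3. My plan is to organize the proof as two implications, reusing the preliminary computation for the harder ($\Rightarrow$) direction and doing a short independent verification for ($\Leftarrow$).

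\textbf{The ($\Rightarrow$) direction.} Assume $\reg(\cI)=3$, so $\cI\in R_3$ and $I$ has Hilbert function $\varphi_3=(0,0,2,8,19,36,\dots)$. From $\dim I_2=2$ write $I_2=\langle f,g\rangle$ with $f,g\in P_2$. If $f,g$ have no common linear factor, I claim $\cI=(f,g)$: the ideal $(f,g)$ generated by a regular sequence of two quadrics has Hilbert polynomial $Q(T)$ and is saturated, and since $(f,g)\subseteq\cI$ with both having the same Hilbert polynomial (and $(f,g)$ saturated of the right regularity $\le 3$), equality of the degree-$n$ pieces for $n\gg 0$ together with reverse inclusion forces $(f,g)=\cI$. (One should double-check that $(f,g)$ truly has regularity $3$: a complete intersection of two quadrics has Castelnuovo--Mumford regularity $2+2-1=3$, so this is automatic.) This is case 1. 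If instead $f,g$ \emph{do} share a common linear factor $\ell$, write $f=\ell\ell_1$, $g=\ell\ell_2$; the linear forms $\ell_1,\ell_2$ must be independent, else $I_2$ would be $1$-dimensional. Now I follow the displayed paragraph verbatim: $\dim P_1I_2=7$ (it cannot be $6$, as that would force $I$ into the principal ideal $(\ell)$, impossible since then the Hilbert function would be wrong), hence $I_3=P_1I_2\oplus f\cdot k$ for a cubic $f$; $f$ is not divisible by $\ell$ since otherwise $I\subseteq(\ell)$; then from $\dim I_4=19$, $\dim P_2I_2=16$, $\dim fP_1=4$ one gets $\dim(P_2I_2\cap fP_1)=1$, and $\ell\nmid f$ forces this intersection to be spanned by an element divisible by $\ell_1\ell_2$, which (chasing the multiplication map $P_1\to P_4$, $h\mapsto hf$) yields $f\in(\ell_1,\ell_2)_3$. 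Finally, because $\reg(\cI)=3$ we have $I_{n+3}=P_nI_3$ for all $n\ge0$, so $\cI$ is generated in degrees $\le3$ by $I_3=\langle\ell\ell_1,\ell\ell_2,f\rangle$, giving case 2.

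\textbf{The ($\Leftarrow$) direction.} Conversely, I must show each listed ideal lies in $\HH$ and has regularity $3$. For case 1 this is the classical fact about complete intersections of two quadrics already invoked above. For case 2, the plan is to compute the Hilbert function of $J=(\ell\ell_1,\ell\ell_2,f)$ directly and confirm it equals $\varphi_3$ and that $J$ is saturated with regularity $3$. I would change coordinates so $\ell_1=x$, $\ell_2=y$, and write $f=x f_1 + y f_2$ with $f_1,f_2\in P_2$ (possible since $f\in(x,y)_3$), where the hypothesis $\ell\nmid f$ rules out degenerate configurations; then analyze $J$ via the exact sequences coming from the syzygies among $\ell x,\ell y,f$. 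The cleanest route is probably to observe geometrically that $V(J)$ is a plane cubic curve in the plane $\{\ell=0\}$ (coming from the conic pair $\ell\ell_1,\ell\ell_2$ whose base is the line $\ell=\ell_1=0$ union $\ell=\ell_2=0$, cut down by $f$)—more precisely a degenerate degree-$4$, genus-$1$ scheme consisting of a plane cubic together with a disjoint or embedded point/line configuration forced by $f$—and then read off the Hilbert polynomial $Q(T)$ and regularity.

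\textbf{Main obstacle.} The routine part is the dimension bookkeeping; the genuinely delicate point is the ($\Leftarrow$) direction for case 2: showing that \emph{every} choice of $\ell,\ell_1,\ell_2,f$ with $\ell_1,\ell_2$ independent and $\ell\nmid f$ produces an ideal with exactly the Hilbert function $\varphi_3$ (not something smaller in low degrees) and regularity precisely $3$ rather than $2$. One must verify $\dim J_2=2$ (i.e. $f$ genuinely contributes nothing in degree $2$, which is clear as $\deg f=3$) and that no unexpected linear syzygy collapses $\dim J_3$ below $8$ or $\dim J_4$ below $19$; the condition $\ell\nmid f$ is exactly what prevents $J\subseteq(\ell)$ and hence what pins the regularity at $3$. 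I expect this to require a careful but elementary case analysis of how $f$ meets $(\ell)$ and $(\ell_1,\ell_2)$, or equivalently a short local computation of $\cO_{V(J)}$ along the line $\ell=\ell_1=0$.
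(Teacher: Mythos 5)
Your ($\Rightarrow$) direction and your case 1 of ($\Leftarrow$) are correct and track the paper (the paper in fact carries out the whole forward analysis in the text preceding the statement, so its formal proof consists only of the case-2 verification). The problem is exactly where you locate it yourself: the ($\Leftarrow$) direction in case 2 is announced as a ``plan'' with two candidate routes, neither of which is carried out, and this verification is the entire content of the proposition's proof. Moreover the geometric route you call ``cleanest'' is shakier than you suggest: since $f\in(\ell_1,\ell_2)_3$, the cubic $f$ vanishes identically on the line $\ell_1=\ell_2=0$, so $V(J)$ is not a plane cubic in $\{\ell=0\}$ but the union of such a cubic with that entire line (plus whatever embedded structure sits where they meet); pinning down this scheme structure precisely enough to ``read off'' $Q(T)$ is essentially equivalent to the Hilbert-function computation you are postponing, so nothing is gained.

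The missing step has a one-line solution, which is what the paper does, and each hypothesis of case 2 is used exactly once. One has $J_n=\ell(\ell_1,\ell_2)_{n-1}+fP_{n-3}$, and
\[
\ell(\ell_1,\ell_2)_{n-1}\cap fP_{n-3}=\ell fP_{n-4}.
\]
Indeed, the inclusion $\supseteq$ holds because $f\in(\ell_1,\ell_2)_3$ gives $\ell fP_{n-4}\subset\ell(\ell_1,\ell_2)_{n-1}$; and for $\subseteq$, an element $fp$ of the left-hand side lies in $(\ell)$, and since $\ell$ is prime and $\ell\nmid f$ this forces $\ell\mid p$. Consequently
\[
\dim J_n={n+1\choose 3}+{n\choose 2}+{n\choose 3}-{n-1\choose 3}=Q(n)=\varphi_3(n)\quad\text{for } n\ge 3,
\]
with no case analysis of how $f$ meets $(\ell)$ and $(\ell_1,\ell_2)$ required. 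I recommend replacing your sketch by this computation; the rest of your write-up can stand.
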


\begin{proof}
It remains to show that in the second case $\cI$ has the Hilbert function $\varphi_3$. Because of $\ell(\ell_1,\ell_2)_{n-1}\cap fP_{n-3}=\ell fP_{n-4}$ one has $\dim I_n={n-2+3\choose 3}+{n-2+2\choose 2}+{n\choose 3}-{n-1\choose 3}=Q(n)=\varphi_3(n),n\ge 4$.
\end{proof}

\subsection{Description of $R_4$}
The same argumentation as in (2.1) shows that $I_2=\ell(\ell_1,\ell_2)_1$ and $I_3=\ell(\ell_1,\ell_2)_2+f\cdot k$. Suppose that $f$ is not divisible by $\ell$. If $f\notin (\ell_1,\ell_2)_3$ then $\ell(\ell_1,\ell_2)_3\cap fP_1=(0)$ and $I_4\supset \ell(\ell_1,\ell_2)_3+fP_1$ would have a dimension $\ge 20$, contradiction. If $f\in (\ell_1,\ell_2)_3$, then $I$ would contain the ideal $J:=(\ell\ell_1,\ell\ell_2,f)$ which according to Proposition 2.1 would have the Hilbert function $\varphi_3=\varphi_4$ and regularity 3, contradiction. It follows that $f=\ell q$ with $q\in P_2$, but $q\notin (\ell_1,\ell_2)_2$. The ideal $J:=\ell(\ell_1,\ell_2,q)$ has the Hilbert polynomial ${n-2+3\choose 3}+{n-2+2\choose 2}+{n-3+1\choose 1}$, which agrees with the Hilbert function $\varphi$ of $\tJ$ if $n\ge 2$. From $\varphi(4)=18$ it follows that $I_4=J_4+p\cdot k,p\in P_4$. From $I_5=J_5+pP_1,\dim I_5=36, \dim J_5=33$ it follows that $\dim J_5\cap pP_1=1$. Here $p$ can not be divisible by $\ell$, as otherwise $I\subset (\ell)$ would follow. Therefore $J_5\cap pP_1=\ell p\cdot k$ and thus $p\in (\ell_1,\ell_2,q)_4$ follows.

\begin{proposition}
$\cI$ has the regularity 4 iff $\cI=(\ell(\ell_1,\ell_2,q),p)$, where $q\in P_2,q\notin (\ell_1,\ell_2)_2, p\in (\ell_1,\ell_2,q)_4$ and $p\notin (\ell)$.
\end{proposition}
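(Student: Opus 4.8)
The plan: the paragraph preceding the proposition already establishes that every $\cI$ of regularity $4$ has the displayed form, so only the converse remains. Write $I=(\ell K,p)\subset P$ with $K=(\ell_1,\ell_2,q)$; I will check along the way that $I$ is saturated, so that $I=\bigoplus_nH^0(\mP^3,\cI(n))$ and $\reg\cI$ is the Castelnuovo--Mumford regularity of $I$. Throughout, $\ell_1,\ell_2$ are linearly independent, as is forced by $\dim I_2=2$ in that analysis.

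First I would record two structural facts. Because $q\notin(\ell_1,\ell_2)_2$ and $(\ell_1,\ell_2)$ is prime, the forms $\ell_1,\ell_2,q$ are a regular sequence, so $K$ is a complete intersection of type $(1,1,2)$; hence $P/K$ has Hilbert function $1,2,2,2,\dots$ and $\reg K=2$ (geometrically, $K$ cuts out a length-two subscheme of the line $\ell_1=\ell_2=0$). Secondly, $\ell$ is a prime element not dividing $p$, so a relation $\ell k=ap$ with $k\in K$, $a\in P$ forces $\ell\mid a$; reading this off gives $I:\ell=K$ and $\ell K\cap(p)=(\ell p)$, whence the exact sequence
\[
0\longrightarrow P(-5)\xrightarrow{\;a'\mapsto(-a'p,\,a'\ell)\;}K(-1)\oplus P(-4)\xrightarrow{\;(k,a)\mapsto\ell k+ap\;}I\longrightarrow 0,
\]
both maps being well defined since $p\in K$. (This presents the subscheme of $I$ as the plane quartic $\ell=p=0$ together with the length-two scheme $\Proj P/K$.)

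The rest is bookkeeping on this sequence, in three steps. (i) Taking dimensions in each degree gives $\dim I_n=\dim K_{n-1}+\binom{n-1}{3}-\binom{n-2}{3}$ for $n\ge5$, and the low degrees are read off directly (in particular $\dim I_4=\dim K_3+1=19$, as $p\notin\ell K_3$); since $\dim K_{n-1}=\binom{n+2}{3}-2$ for $n\ge2$, a short computation identifies the result with $Q(n)=\varphi_4(n)$ for all $n\ge0$. (ii) The standard estimate $\reg C\le\max\{\reg B,\reg A-1\}$ for $0\to A\to B\to C\to 0$, applied with $A=P(-5)$ and $B=K(-1)\oplus P(-4)$ (of regularities $5$ and $\max\{3,4\}=4$), gives $\reg I\le4$. (iii) A depth count along the same sequence yields $\operatorname{depth}I\ge2$, hence $\operatorname{depth}(P/I)\ge1$; so the irrelevant ideal is not associated to $P/I$ and $I$ is saturated. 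Since $P_1I_3=\ell K_3$ does not contain $p$, the form $p$ is a minimal generator of $I$ in degree $4$, forcing $\reg I\ge4$. Combining (ii) and (iii), $\reg I=4$, i.e. $\cI\in R_4$.

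I expect the only genuinely non-formal step to be the second structural observation — that $I:\ell=K$, equivalently that the subscheme of $I$ decomposes as "plane quartic plus two points" — since once the resolution above is written down, the Hilbert function, the bound $\reg I\le4$, and the saturatedness all drop out mechanically. It bears emphasizing that, unlike in the proof of Proposition 2.1, matching Hilbert functions does not by itself determine the regularity here (because $\varphi_3=\varphi_4$): one must actually exhibit a minimal generator in degree $4$, which is exactly why step (iii), establishing that $(\ell K,p)$ is already saturated, cannot be skipped.
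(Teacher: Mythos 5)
Your proof is correct, and its computational core coincides with the paper's: the identity $\ell K\cap(p)=(\ell p)$ (equivalently $I:\ell=K$) is exactly the displayed fact $J_n\cap pP_{n-4}=\ell pP_{n-5}$ on which the paper's one-line Hilbert-function computation rests, and your degree-by-degree count from the sequence $0\to P(-5)\to K(-1)\oplus P(-4)\to I\to 0$ reproduces the paper's formula $\dim I_n=\varphi(n)+\binom{n-1}{3}-\binom{n-2}{3}$. Where you diverge is in what you do with that identity: the paper stops at the Hilbert function, implicitly settling the regularity by the classification of Section 1 (an ideal in $\HH$ with Hilbert function $\varphi_3=\varphi_4$ has generic initial ideal $\cB_3$ or $\cB_4$, hence regularity $3$ or $4$, and regularity $3$ is excluded by Proposition 2.1 since $I_4\neq P_1I_3$), and it likewise leaves saturatedness tacit. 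You instead package the same intersection fact into an explicit presentation of $I$ and extract the bound $\reg I\le 4$, the saturation via the depth count, and the lower bound from the degree-$4$ minimal generator, all homologically. This buys a self-contained verification that does not lean on Galligo/Bayer--Stillman or on Proposition 2.1, at the cost of some machinery the paper deliberately avoids; your closing observation that $\varphi_3=\varphi_4$ makes the Hilbert function alone insufficient is well taken and identifies precisely the step the paper leaves to the reader.
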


\begin{proof}
It remains to show that an ideal of this shape has the Hilbert function $\varphi_4$. As $J_n\cap pP_{n-4}=\ell pP_{n-5}$, it follows that $\dim I_n=\varphi (n)+{n-4+3\choose 3}-{n-5+3\choose 3}=Q(n)=\varphi_4(n)$ for $n\ge 4$.
\end{proof}

We determine some other properties of $R_4$. Put $q(n)={n-1+3\choose 3}+{n-1+2\choose 2}+{n-2+1\choose 1}$. Each ideal $\cI\subset\cO_{\mP^3}$ with Hilbert polynomial $q(n)$ is 2-regular, and thus has the form $(\ell_1,\ell_2,q),q\in P_2/ (\ell_1,\ell_2)_2$. The mapping $(\ell_1,\ell_2,q)\mapsto \langle\ell_1,\ell_2\rangle$ makes the Hilbert scheme $H_q$ into a projective bundle over $\Grass^2(P_1)$ with fibres isomorphic to $\mP^2$. Thus $H_q$ is a 6-dimensional variety. We consider the following morphisms:\\
Put $X:=\mP(P_1)\times H_q\times\Grass^{19}(P_4)$ and define $f_1:X\to \cX_1:=\Grass^{18}(P_4)\times\Grass^{19}(P_4)\times\Grass^{33}(P_4)$ by $(\ell,\cI,F)\mapsto (\ell H^0(\cI(3)),F,H^0(\cI(4)))$ and $f_2:X\to\cX_2:=\Grass^{18}(P_4)\times\Grass^{19}(P_4)\times\Grass^{20}(P_4)$ by $(\ell,\cI,F)\mapsto (\ell H^0(\cI(3)),F,\ell P_3)$. Put $\cF_i:=\{(F_1,F_2,F_3)\in\cX_i|F_1\subset F_2\subset F_3\}$, and $X_i:=f_i^{-1}(\cF_i), i=1,2$. Finally define $\pi_i:X_i\to\mP(P_1)\times H_q$ by $(\ell,\cI,F)\mapsto (\ell,\cI)$. Then $\pi_i$ is surjective with fibres isomorphic to $\mP((\ell_1,\ell_2,q)_4/\ell(\ell_1,\ell_2,q)_3)$ $\simeq\mP^{14}$ and to $\mP(\ell P_3/\ell (\ell_1,\ell_2,q)_3)\simeq\mP^1$, respectively.

One sees that $(\ell(\ell_1,\ell_2,q),p)\mapsto (\ell,(\ell_1,\ell_2,q),\ell (\ell_1,\ell_2,q)_3+p  \cdot k)$ defines an isomorphism $R_4\to X_1 \setminus X_2$ and one has:

\begin{proposition}
$R_4$ is a smooth quasiprojective variety of dimension 23.
\end{proposition}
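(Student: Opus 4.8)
The plan is to exhibit $X_1$ as a projective bundle over a smooth base, read off smoothness and the dimension from that structure, and then use that $R_4$ is an open subvariety of $X_1$.

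First I would check that $B:=\mP(P_1)\times H_q$ is smooth and irreducible of dimension $9$. Here $\mP(P_1)\cong\mP^3$ is smooth of dimension $3$, while $H_q$ was constructed as a $\mP^2$-bundle over the Grassmannian $\Grass^2(P_1)$, which is smooth and irreducible of dimension $4$; a projective bundle over such a base is again smooth and irreducible, of dimension $6$. Over $H_q$ the universal ideal is flat, and since the members $(\ell_1,\ell_2,q)$ of $H_q$ are $2$-regular with Hilbert function $q(n)$, the pushforwards giving $H^0(\cI(3))$ and $H^0(\cI(4))$ are locally free subsheaves $\mathcal{V}_3\subset\cO_{H_q}\otimes_k P_3$ and $\mathcal{V}_4\subset\cO_{H_q}\otimes_k P_4$ of ranks $q(3)=18$ and $q(4)=33$.

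Next I would pull $\mathcal{V}_3,\mathcal{V}_4$ back to $B$ and use the tautological subbundle $\cO(-1)\hookrightarrow\cO_{\mP(P_1)}\otimes_k P_1$. Multiplication defines a map $\cO(-1)\otimes\mathcal{V}_3\to\cO_B\otimes_k P_4$ which is fibrewise injective (multiplication by a nonzero linear form is injective on $P_3$), so its image $\mathcal{U}$ is a rank-$18$ subbundle; moreover $\mathcal{U}\subset\mathcal{V}_4$ because $\ell(\ell_1,\ell_2,q)_3\subset(\ell_1,\ell_2,q)_4$, and the quotient $\mathcal{V}_4/\mathcal{U}$ is locally free of rank $15$. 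By the definitions of $f_1$ and $\cF_1$, a point $(\ell,\cI,F)\in X$ lies in $X_1=f_1^{-1}(\cF_1)$ exactly when $\ell H^0(\cI(3))\subset F\subset H^0(\cI(4))$, i.e. when $F$ gives a line in the fibre of $\mathcal{V}_4/\mathcal{U}$; hence $\pi_1$ identifies $X_1$ with $\mP(\mathcal{V}_4/\mathcal{U})$, a $\mP^{14}$-bundle over $B$. Consequently $X_1$ is smooth and irreducible of dimension $9+14=23$, and it is projective, being a closed subscheme of the projective variety $X$ (it is $f_1^{-1}$ of the closed incidence locus $\cF_1$).

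Finally, $\cF_2$ is likewise closed in $\cX_2$, so $X_2=f_2^{-1}(\cF_2)$ is closed in $X$, and $X_1\setminus X_2=X_1\setminus(X_1\cap X_2)$ is a nonempty (e.g.\ $\cB_4\in R_4$) open subset of $X_1$; thus it is a smooth irreducible quasiprojective variety of dimension $23$. Via the isomorphism $R_4\cong X_1\setminus X_2$ already noted, the same holds for $R_4$. The step I expect to cost the most effort is justifying that $\mathcal{V}_3$ and $\mathcal{V}_4$ really are locally free over $H_q$ — equivalently, that $\pi_1$ is a genuine projective bundle and not merely a surjection with equidimensional fibres — which comes down to flatness of the universal family over $H_q$ together with the constancy of $n\mapsto q(n)$ in the degrees in play.
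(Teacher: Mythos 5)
Your proposal is correct and follows essentially the same route as the paper: the paper's Proposition is read off from the preceding construction, namely that $\pi_1\colon X_1\to\mP(P_1)\times H_q$ is a $\mP^{14}$-bundle over a smooth $9$-dimensional base and $R_4\cong X_1\setminus X_2$ is open in $X_1$. You merely make explicit the local freeness of the sheaves $\mathcal{V}_3,\mathcal{V}_4$ underlying the bundle structure, which the paper leaves to the reader with ``one sees that''.
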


\subsection{Description of $R_5$}
One has $I_2=\langle f_1,f_2,f_3\rangle$. If any two of the $f_i\in P_2$ would not have a common divisor, then certainly $\cI\notin R_5$. If every pair of these forms, but not all of them, would have a common divisor, then $\cI$ would define a twisted cubic in $\mP^3$. Thus $I_2=\ell\cdot\langle \ell_1,\ell_2,\ell_3\rangle$, and $\ell_1,\ell_2,\ell_3$ are linear independent. If $L:=(\ell_1,\ell_2,\ell_3)$ then from $\dim (\ell L)_5=34$ it follows that $I_5=\ell L_4+\langle f,g\rangle$.

Suppose that $f\notin (\ell)$ and $f\notin L$. Then $fP_1\cap \ell L_5=(0)$ and $fP_2\cap\ell L_6=f\ell L_1$. It follows that:

\begin{equation}\label{1}
  \dim (\ell L_5+ fP_1) = 59
\end{equation}

\begin{equation}\label{2}
  \dim (\ell L_6+ fP_2)=90
\end{equation}

Because of $Q(6)=60$, from (2.1) it follows that

\begin{equation}\label{3}
  \dim [(\ell L_5+ fP_1)\cap gP_1] = 3.
\end{equation}

By a theorem of Macaulay (cf. [Gre]) we deduce from (2.3) that $\dim [(\ell L_6+fP_2)\cap gP_2]\ge 3^{(3)}=9$. But then from (2.2) it follows that
$$
  \dim (\ell L_6+fP_2+gP_2)\le 90+{5\choose 3}-9=91
$$
contradiction, as $Q(7)=92$.

\begin{conclusion}\label{2.4}
One has $f\in (\ell)$ or $f\in L$.
\end{conclusion}

Suppose that $f\in (\ell)$. From Conclusion 2.4 it follows that $g\in L$. One has $f=\ell p,p\in P_4$ but $p\notin L$. Then

\begin{equation}\label{4}
  \dim (\ell L_5\cap fP_1)=3.
\end{equation}

Because of $g\notin (\ell)$ one has $\dim \ell L_5\cap gP_1=1$, and it follows that

\begin{equation}\label{5}
  \dim (\ell L_5+gP_1)=58.
\end{equation}

Because of $\dim (\ell L_5+fP_1+gP_1)=Q(6)=60$ one has $\dim [(\ell L_5+gP_1)\cap fP_1]=2$, contrary to (2.4).

\begin{conclusion}\label{2}
One has $f\notin (\ell),g\notin (\ell)$ but $f\in L$ and $g\in L$.
\end{conclusion}

Because of $\dim I_6=60, \dim \ell L_5=55$ and $\ell (f,g)\subset\ell L$ one gets:

\begin{conclusion}\label{3}
There are linear forms $r_1$ and $r_2$, which are not divisible by $\ell$, such that $fr_1+gr_2\in \ell L$.
\end{conclusion}

By applying a linear transformation one can achieve one of the following cases:\\[2mm]
{\it 1st case:} $\ell=x,L=(y,z,t)$\\
We put $S=k[y,z,t]$. Because of $f,g\in L$ one can assume without restriction that $f,g\in S_5$ and $r_1,r_2\in S_1$. Then $fr_1+gr_2\in xL\iff fr_1+gr_2=0\Rightarrow f=\ell_1h,g=\ell_2h$, where $h\in S_4$ and $\ell_1,\ell_2\in S_1$ are linear independent.\\[2mm]
{\it 2nd case:} $\ell=x,L=(x,y,z)$\\
Again $S:=k[y,z,t]$ and one can assume without restriction that $f=f'+\alpha xt^4,f'\in S_5,\alpha\in k$ and $g\in S_5$. Moreover one can assume that in the relation

\begin{equation}\label{6}
  fr_1+gr_2\in x(x,y,z)
\end{equation}

one has $r_1,r_2\in S_1$. Then (2.6) is equivalent to

\begin{equation}\label{7}
  f'r_1+gr_2=0\quad\mbox{and}\quad \alpha r_1\in (x,y,z).
\end{equation}

We note that $r_1$ and $r_2$ are linear independent, because otherwise $f'$ and $g$ would be linear dependent, and by linearly combining $f$ and $g$, we could achieve that $f=xt^4$, contradicting Conclusion 2.5. It follows that $f=\ell_1 h+\alpha xt^4,g=\ell_2h,h\in S_4, \ell_1,\ell_2\in S_1$ linear independent and $\alpha=0$ or $\ell_2\in\langle y,z\rangle$.

We now suppose that $f$ and $g$ have the shape described in the first or second case. Then $r=(\ell_2,-\ell_1)$ is a relation between $f$ and $g$ (modulo $xL$) in any case. One sees that any such relation has the form $p\cdot r,p\in S$. We conclude that $\dim I_n=Q(n),n\ge 5$.

We now determine the dimension of $R_5$. Put $q(n)={n-1+3\choose 3}+{n-5+2\choose 2}+{n-5+1\choose 1}$, and let $H_q$ be the Hilbert scheme of ideals with Hilbert polynomial $q(n)$, i.e. $H_q=H_{4,2}$. Define $\pi:R_5\to H_q$ by $(\ell L,f,g)\mapsto (\ell,f,g)$. From the preceding argumentation one sees that the fibre of $\pi$ over $(\ell,f,g)$ is equal to $\mP^3\setminus V_+(\ell)$ union a subset of $V_+(\ell)\times\A^1$, depending on $(\ell,f,g)$. In any case, all fibres are 3-dimensional. The image of $\pi$ is equal to the closed subset $H_1$ of $H_q$ consisting of the ideals $(\ell, Lf)$, where $\ell\in P_1,\langle\ell\rangle \subset L$ a 3-dimensional subspace of $P_1$ and $f\in P_4/\ell P_3$. One sees that $\dim H_1=3+2+{4+2\choose 2}-1=19$ and we get
$$
  \dim R_5=22.
$$

\subsection{Description of $R_6$} 
One sees easily that $R_6$ is closed in $\HH$ and consists of the ideals $(\ell, f(h,g))$ where $\ell\in P_1,f\in P_4/\ell P_3,h\in P_1/\ell\cdot k$ and $g\in P_2/(\ell,h)_2$. It follows that
$$
  \dim R_6=21.
$$

\section{The Reeves-Stillman component of $\HH$}\label{3}

One considers ideals of the form $\cI=(\ell,f)\cap\cP_1\cap\cP_2,\ell\in P_1,f\in P_4/\ell P_3,\cP_1$ and $\cP_2$ prime ideals in $P$, which define two different closed points in $\mP^3 \setminus (V_+(\ell)\cap V_+(f))$. Then $\cI$ has the Hilbert polynomial $Q$ and the set of such ideals, i.e. the set of corresponding points in $\HH(k)$ is denoted by $\cF$. Then a beautiful theorem of Reeves and Stillman ([RS], thm. 1.4) applied to our special situation gives:\\
The closure of $\cF$ in $\HH$ is an irreducible component $Z$ of $\HH$, which we also denote by $H_{RS}$. The lexicographic point $z$, which corresponds to the ideal $\cB_6$, lies in $Z$ and $z$ is a smooth point (and therefore does not lie in any other component of $\HH$). The dimension formula in ([RS], thm. 4.1), or more easily, a dimension count shows that $\dim Z=23$.

It is clear that $Z$ is invariant under $G=GL(4,k)$ and there is a maximal open subset $Y$ of $\HH$ which is contained in $Z$ and which contains the point $z$.

In order to show that $R_5\cup R_6\subset Z$ we need some simple facts.

\begin{remark}\label{1}
If $p\in\HH(k)$ and $\overline{Gp}\cap R_6\neq\emptyset$, then $p\in Y$.
\end{remark}

\begin{proof}
There is a closed point in $\overline{Gp}\cap R_6$ which is $B(4,k)$-invariant and necessarily this point is equal to $z$. There is a $g\in G$ such that $gp\in Y$ and $p\in g^{-1}(Y)=Y$ follows.
\end{proof}

\begin{remark}\label{2}
$R_6\subset Y$
\end{remark}

\begin{remark}\label{3}
$R_5\subset\overline{Y}$
\end{remark}

\noindent {\it Proof.} To begin with, let the point $p\in R_5$ correspond to an ideal $\cI$ as in the first case of (2.3). By applying a suitable linear transformation one can achieve that $\cI=(t(x,y,z),f,g)$. As $f,g\in (x,y,z)$ one can assume without restriction that $f,g\in S_5$, where $S=k[x,y,z]$. By applying a linear transformation which involves only the variables $x,y,z$ one can achieve that the initial monomials of $f$ and $g$ generate a 2-dimensional, $B(3,k)$-invariant subspace of $S_5$. It follows that the initial monomials of $f$ and $g$ are $x^5$ and $x^4y$, respectively. Thus we can write
$$
  f=x^5+x^4f_1+x^3f_2+x^2f_3+xf_4+f_5
$$
$$
  g=x^4g_1+x^3g_2+x^2g_3+xg_4+g_5
$$
where $f_i,g_i\in k[y,z]_i$ and $g_1=y+az,a\in k$.

Let $\G_a$ operate on $P$ by $\psi_{\alpha}:t\mapsto -\alpha x+t$ and as identity on the remaining variables. From $V=V_{\alpha}:=(-\alpha x+t)\langle x,y,z\rangle\subset\psi_{\alpha}(H^0(\cI(2)))$ it follows that
$$
  tx\equiv \alpha x^2,\; ty\equiv \alpha xy,\; tz\equiv\alpha xz
$$
modulo $V$. One deduces from this that 

\begin{equation}\label{8}
  x^{i+1}\equiv\alpha^{-i}t^ix, x^iy\equiv\alpha^{-i}t^iy,x^iz\equiv\alpha^{-i}t^iz
\end{equation}

modulo $P_{i-1}\cdot V,1\le i\le 4$, where $\alpha\in k^*$. Clearly $f$ and $g$ are invariant under this $\G_a$-operation. By using (3.1) it follows that
$$
  \begin{array}{l}
  f\equiv\alpha^{-4}t^4x+\alpha^{-4}t^4f_1+\cdots +\alpha^{-1}tf_4+f_5\\
  g\equiv \alpha^{-4}t^4g_1+\alpha^{-3}t^3g_2+\cdots+\alpha^{-1}tg_4+g_5
  \end{array}
$$

modulo $P_3V\subset\psi_{\alpha}(H^0(\cI(5)))$. Now, if $\G_m$ operates on $P$ by
$$
  \sigma(\lambda):x\mapsto x,y\mapsto\lambda^{-2}y,z\mapsto\lambda^{-2}z,t\mapsto t,
$$
then $V$ is invariant under this operation and one has

\begin{equation}\label{3.2}
  \begin{array}{l}
  \sigma(\alpha)f\equiv\alpha^{-4}t^4x+\alpha^{-6}t^4f_1+\ldots +\alpha^{-9}tf_4+\alpha^{-10}f_5\\
  \sigma(\alpha)g\equiv\alpha^{-6}t^4g_1+\cdots +\alpha^{-9}tg_4+\alpha^{-10}g_5
  \end{array}
\end{equation}

if $\alpha\in k^*$.

The morphism $\tau:\A^1\setminus \{0\}\to\HH$ defined by $\alpha\mapsto\cI_{\alpha}:=\psi_{\alpha}\circ\sigma(\alpha)\cI$ has an extension $\overline{\tau}:\mP^1\to\HH$. Let $\cI_{\infty}=\lim\limits_{\alpha\to\infty}\cI_{\alpha}=\lim\limits_{\alpha\to 0}\cI_{1/\alpha}$ be the ``limit ideal'', which corresponds to the point $\overline{\tau}(\infty)\in\HH(k)$. From (3.2) it follows that
$$
  x\langle x,y,z\rangle\cdot P_3\oplus t^4x\cdot k\oplus t^4g_1\cdot k\subset H^0(\cI_{\infty}(5))
$$
and thus $x(x,y,z,t^4)\subset\cI_{\infty}$. But then $x\in\cI_{\infty}$ and the Hilbert function of $\cI_{\infty}$ is greater than the Hilbert function $\varphi_5$ of $\cI$. This shows that $\cI_{\infty}\in R_6$, and by Remark (3.2) $p\in Y$ follows.

Now let the point $p$ correspond to an ideal $\cI$ as in the second case of (2.3). Again one can assume that $\cI=(x(x,y,z),f,g)$. Using the same notations as in (2.3) we can write $f=\ell_1h+\alpha xt^4,g=\ell_2h,\ell_1,\ell_2,h\in k[y,z,t]$.\\
{\it Subcase 1:} $\alpha\neq 0$\\
Then we let $\G_m$ operate by $\sigma(\lambda):x\mapsto \lambda x$ and as the identity on the remaining variables. Then we get $\cI_{\infty}:=\lim\limits_{\lambda\to\infty}\sigma(\lambda)\cI\supset x(x,y,z,t^4)$ and just like before $p\in Y$ follows.\\
{\it Subcase 2:} $\alpha=0$\\
By replacing one of the forms $f$ and $g$ by a suitable linear combination we can assume without restriction that $\ell_2\in\langle y,z\rangle$. If we put $f_1=\ell_1h+xt$, then $(\ell_2,-\ell_2)$ is a relation between $f_1$ and $g$ modulo $x(x,y,z)$. But then the ideal $\cI_1=(x(x,y,z),f_1,g)$ is as in Subcase 1, and we have shown already that $\cI_1\in Y$ follows. As $\cI=\lim\limits_{\lambda\to 0}\sigma(\lambda)\cI_1$, we conclude that $p\in\overline{Y}$.\hfill $\Box$

\begin{lemma}\label{3.4}
$R_5\cup R_6\subset\overline{Y}=H_{RS}$.
\end{lemma}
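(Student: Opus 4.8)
The plan is to deduce the lemma directly from the two preceding remarks together with one elementary topological observation, so the argument will be short. First I would establish the inclusion $R_5\cup R_6\subset\overline{Y}$: the remark $R_6\subset Y$ gives $R_6\subset Y\subset\overline{Y}$, while the remark $R_5\subset\overline{Y}$ (whose proof has just been given) handles the remaining piece, so taking the union yields $R_5\cup R_6\subset\overline{Y}$.

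Next I would identify $\overline{Y}$ with $H_{RS}$. Recall that $H_{RS}=Z$ was defined as an irreducible component of $\HH$, hence is irreducible and closed in $\HH$, and that $Y$ is by construction a nonempty open subset of $\HH$ (it contains the lexicographic point $z$) with $Y\subset Z$. Since $Y\subset Z$ we have $Y=Y\cap Z$, a nonempty open subset of the irreducible space $Z$, hence dense in $Z$; thus the closure of $Y$ computed inside $Z$ is all of $Z$. On the other hand $Z$ is closed in $\HH$, so the closure $\overline{Y}$ of $Y$ taken in $\HH$ is contained in $Z$, and therefore equals the closure of $Y$ in $Z$, namely $Z$. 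This gives $\overline{Y}=Z=H_{RS}$, and combining with the previous paragraph we obtain $R_5\cup R_6\subset\overline{Y}=H_{RS}$.

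There is no serious obstacle: the substantive work was carried out in the two remarks, and what remains is only the observation that a nonempty open subset of an irreducible component of $\HH$ is dense in that component. The one point that needs a little care is that $Y$ was declared open in $\HH$, not merely in $Z$; that is exactly what forces $Y=Y\cap Z$ to be open in $Z$, and hence dense there.
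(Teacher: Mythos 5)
Your proposal is correct and matches the paper's intent: the lemma is stated as an immediate consequence of Remarks 3.2 and 3.3, and your argument simply supplies the routine topological fact that the nonempty open set $Y\subset Z$ is dense in the irreducible closed component $Z$, whence $\overline{Y}=Z=H_{RS}$. No difference in approach and no gap.
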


\section{The Vainsencher-Avritzer component of $\HH$}\label{4}
The set of all curves in $\mP^3$, which are complete intersections of two quadrics is an open set $V\subset R_3$, which is isomorphic to an open subset of $\Grass^2(P_2)$. Thus $V$ is irreducible of dimension 16. The closure $\overline{V}$ in $\HH$ is an irreducible component of $\HH$, which has been described by Vainsencher and Avritzer and which we denote by $H_{VA}$.

Using a similar argumentation as in (2.2) one can show that $R'_3:=R_3\setminus V$ is a smooth variety of dimension 15. We want to show that $R'_3\subset\overline{V}$. (Probably this can be read from the diagram on page 48 in [VA].).

Let be $\cI=(\ell(\ell_1,\ell_2),F)\in R'_3$ and write $F=\ell_1p+\ell_2q,p,q\in P_2$ not divisible by $\ell$ (cf. Proposition 2.1). We put $f_{\alpha}:=\ell\ell_1+\alpha q,g_{\alpha}=\ell\ell_2-\alpha p$, where $\alpha$ is a variable. If $\tau\in P$ is a divisor of $f_{\alpha}$ and $g_{\alpha}$ for an element $\alpha  \in k^*$, then $\tau$ is a divisor of $\ell_2f_{\alpha}-\ell_1g_{\alpha}=\alpha F$. It follows that $\tau$ is an element of the finite set of divisors of $F$ (modulo a constant factor, of course). Suppose there are infinite many $\alpha\in k$ such that $f_{\alpha}$ and $g_{\alpha}$ have a common divisor. Then there is a form $\tau\in P$ of positive degree such that $\cA:=\{\alpha\in k|\tau$ divides $f_{\alpha}$ and $g_{\alpha}\}$ is an infinite set. If $\alpha,\beta\in\cA,\alpha\neq\beta$, then $\tau$ is a divisor of $f_{\alpha}-f_{\beta}=(\alpha-\beta)q$ and of $g_{\alpha}-g_{\beta}=(\beta-\alpha)p$. It follows that $\tau$ divides $\ell\ell_1,\ell\ell_2,p,q$. As $\ell_1$ and $\ell_2$ are linear independent, it follows that $\tau=\ell$, contradiction. It follows that $A:=\{\alpha\in k|f_{\alpha}$ and $g_{\alpha}$ have a common divisor$\}$ is a finite set and the morphism $\varphi:\A^1\setminus A\to V\hookrightarrow\HH$ defined by $\alpha\mapsto\cI_{\alpha}:=(f_{\alpha},g_{\alpha})$ has an extension $\overline{\varphi}:\A^1\to\HH$. From $\ell_2f_{\alpha}-\ell_1g_{\alpha}=\alpha F$ it follows $F\in\cI_{\alpha}$ for all $\alpha\in\A^1/A$. If $\cI_0$ denotes the limit ideal $\lim\limits_{\alpha\to 0}\cI_{\alpha}$, i.e. the ideal corresponding to $\overline{\varphi}(0)\in\HH(k)$, one sees that $(\ell\ell_1,\ell\ell_2,F)\subset\cI_0$. As both ideals have the same Hilbert polynomial $(\ell\ell_1,\ell\ell_2,F)=\cI_0$ follows. We get:

\begin{lemma}\label{2}
The set of 3-regular ideals, which are not complete intersections, is contained in $H_{VA}$.
\end{lemma}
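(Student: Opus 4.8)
The plan is to realise every such ideal as a flat limit of complete intersections of two quadrics, so that it automatically lies in $\overline{V}=H_{VA}$. A $3$-regular ideal of $\HH$ is a point of $R_3$, and $R_3$ is the disjoint union of the complete-intersection locus $V$ and the remaining locus $R'_3$; since $V\subset\overline{V}=H_{VA}$ by construction, it suffices to prove $R'_3\subset H_{VA}$. So fix $\cI\in R'_3$. By Proposition 2.1 we may write $\cI=(\ell\ell_1,\ell\ell_2,F)$ with $\ell_1,\ell_2\in P_1$ linearly independent, $F\in(\ell_1,\ell_2)_3$ not divisible by $\ell$, and we fix an expression $F=\ell_1p+\ell_2q$ with $p,q\in P_2$.

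I would then introduce the pencil $f_\alpha:=\ell\ell_1+\alpha q$, $g_\alpha:=\ell\ell_2-\alpha p$ ($\alpha\in k$), rigged so that $\ell_2f_\alpha-\ell_1g_\alpha=\alpha F$; note $(f_0,g_0)=(\ell\ell_1,\ell\ell_2)$ and $F\in(f_\alpha,g_\alpha)$ for every $\alpha\neq0$. The crucial point --- and, I expect, the main obstacle --- is to check that $f_\alpha$ and $g_\alpha$ have no common factor for all but finitely many $\alpha$. A common factor $\tau$ of $f_\alpha,g_\alpha$ with $\alpha\neq0$ divides $\ell_2f_\alpha-\ell_1g_\alpha=\alpha F$, hence divides the fixed cubic $F$, so up to scalars $\tau$ ranges over a finite set; and if one such $\tau$ of positive degree divided both $f_\alpha$ and $g_\alpha$ for infinitely many $\alpha$, then subtracting two of the corresponding relations would force $\tau\mid p$ and $\tau\mid q$, hence $\tau\mid\ell\ell_1$ and $\tau\mid\ell\ell_2$, hence (since $\ell_1,\ell_2$ are independent) $\tau=\ell$ up to a scalar, hence $\ell\mid F$, contradicting $F\notin(\ell)$. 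Thus there is a finite set $A\subset k$ such that for $\alpha\in\A^1\setminus A$ the quadrics $f_\alpha,g_\alpha$ are coprime, so that $\cI_\alpha:=(f_\alpha,g_\alpha)$ is a complete intersection of two quadrics, i.e. a point of $V$.

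It then remains to take the limit. The morphism $\varphi\colon\A^1\setminus A\to\HH$, $\alpha\mapsto\cI_\alpha$, extends --- as $\HH$ is projective and $\A^1\setminus A$ is a smooth curve --- to $\overline{\varphi}\colon\A^1\to\HH$; write $\cI_0=\overline{\varphi}(0)$ for the limit ideal, which is a point of $\HH$ and hence has Hilbert polynomial $Q$. Fix $n\geq6$, so that $[\cJ]\mapsto\cJ_n$ is a morphism from $\HH$ to a Grassmannian (all ideals in $\HH$ being $6$-regular); then $(\cI_0)_n=\lim_{\alpha\to0}(\cI_\alpha)_n$, so $(\cI_0)_n$ contains the limits as $\alpha\to0$ of elements that lie in $(\cI_\alpha)_n$ for all $\alpha\neq0$. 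Applying this to $f_\alpha h\in(\cI_\alpha)_n$ and $g_\alpha h\in(\cI_\alpha)_n$ for $h\in P_{n-2}$, and to $Fh'\in(\cI_\alpha)_n$ for $h'\in P_{n-3}$ (here $F\in\cI_\alpha$), whose limits are $\ell\ell_1h$, $\ell\ell_2h$ and $Fh'$, we get $\ell\ell_1P_{n-2}+\ell\ell_2P_{n-2}+FP_{n-3}\subseteq(\cI_0)_n$, i.e. $\cI\subseteq\cI_0$. Since $\cI$ also has Hilbert polynomial $Q$ by Proposition 2.1, and an inclusion of closed subschemes of $\mP^3$ with the same Hilbert polynomial is an equality, we conclude $\cI=\cI_0\in\overline{V}=H_{VA}$. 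This proves $R'_3\subset H_{VA}$, hence $R_3=V\cup R'_3\subset H_{VA}$, as asserted.

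In short, the heart of the matter is the choice of the pencil $(f_\alpha,g_\alpha)$: it is arranged so as to specialise at $\alpha=0$ to a generating set of $\cI$ while consisting, for general $\alpha$, of coprime quadrics. The only genuinely delicate step is the divisor bookkeeping establishing this coprimality --- which is exactly what keeps the degenerating family inside $V$ rather than back in $R'_3$ --- after which identifying the limit with $\cI$ is a routine Hilbert-polynomial argument.
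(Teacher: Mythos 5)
Your proposal is correct and follows essentially the same route as the paper: the same pencil $f_\alpha=\ell\ell_1+\alpha q$, $g_\alpha=\ell\ell_2-\alpha p$ with $\ell_2f_\alpha-\ell_1g_\alpha=\alpha F$, the same pigeonhole argument on the divisors of $F$ to show coprimality for all but finitely many $\alpha$, and the same Hilbert-polynomial comparison identifying the limit with $\cI$. The only (harmless) variation is that you conclude the divisor argument via $\ell\mid F$ rather than via the paper's normalization that $p,q$ are not divisible by $\ell$.
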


\section{Proof of the theorem}\label{5}
One has $\HH=R_3\mathop{\cup}\limits^{\cdot} (\overline{R}_4\cup\overline{R}_5\cup R_6)$ and we showed in the sections (2.2)--(2.4) and 4 that the dimensions of the subschemes are 16, 23, 22, 21, respectively. As $H_{RS}$ is an irreducible component of dimension 23 containing $R_5\cup R_6$ (Lemma 3.4), we deduce from Proposition 2.3 that $H_{RS}=\overline{R}_4$, and Lemma 4.1 gives finally $\HH=H_{VA}\cup H_{RS}$.


\end{document}